\newtheorem{maintheorem}{Main Theorem}
\newtheorem{theorem}{Theorem}[section]
\newtheorem{lemma}[theorem]{Lemma}
\newtheorem{proposition}[theorem]{Proposition}
\theoremstyle{definition}
\newtheorem{definition}[theorem]{Definition}
\theoremstyle{remark}
\newtheorem{remark}[theorem]{Remark}
\numberwithin{equation}{section}
\newcommand\Z{\ensuremath{\mathbb Z}}
\newcommand\Q{\ensuremath{\mathbb Q}}\newcommand\R{\ensuremath{\mathbb R}}
\newcommand\Nm{\operatorname{Nm}}
\newcommand\CF{\operatorname{CF}}
\def\cO{{\mathcal O}}
\begin{document}

\title[Continued Fractions in $2$-stage Euclidean Quadratic Fields]{Continued Fractions\\in $2$-stage Euclidean Quadratic Fields}


\author{Xavier Guitart}
\address{ Universitat Polit\`ecnica de Catalunya, Barcelona}
\curraddr{}
\email{xevi.guitart@gmail.com}

\author{Marc Masdeu}
\address{Columbia University, New York}
\curraddr{}
\email{masdeu@math.columbia.edu}

\thanks{Partially supported by Grants MTM2009-13060-C02-01 and 2009 SGR 1220.}

\subjclass[2010]{Primary 13F07, 11A55}

\date{\today}

\dedicatory{}

\begin{abstract}
We discuss continued fractions on real quadratic number fields of class number $1$. If the field has the
property of being $2$-stage euclidean, a generalization of the euclidean algorithm can be used to compute these
continued fractions. Although
it is conjectured that all real quadratic fields of class number $1$ are $2$-stage euclidean, this property has been
proven for only a few
of them. The main result of this paper is an algorithm that, given a real quadratic field of class number $1$, verifies
this conjecture, and produces as byproduct enough data to efficiently compute continued fraction expansions. If the
field was not $2$-stage euclidean, then the algorithm would not terminate.  As an application, we enlarge
the list of known $2$-stage euclidean fields, by proving that  all real quadratic  fields of class number
$1$ and discriminant less than $8000$ are $2$-stage euclidean.
\end{abstract}

\maketitle
\section{Introduction}
Let $F$ be a number field with maximal order $\cO_F$. Given a list
of elements $q_1,\dots,q_n\in \cO_F$ the (finite) \emph{continued
fraction} $[q_1,\dots,q_n]$ is the element of $F$ defined
inductively by:
$$[q_1]=q_1,\ \ [q_1,q_2]=q_1+\frac{1}{q_2},\ \ \dots,\  [q_1,q_2,\dots,q_n]=[q_1,[q_2,\dots,q_n]].$$
In the case $F=\Q$, it is a classical result that  every $x\in \Q$
is a continued fraction with coefficients in $\Z$, which can be
effectively computed by means of the euclidean algorithm. This property is no longer true for arbitrary $F$. Indeed,
one has the following
result (cf.~\cite{Co} Theorem 1, Corollary 3 and Proposition 13).
\begin{theorem}[Cooke-Vaser\v{s}tein] Suppose that $\cO_F^\times$ is
infinite. Then every element in $F$ is a continued fraction with
coefficients in $\cO_F$ if and only if $F$ has class number $1$.
\end{theorem}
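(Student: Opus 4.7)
The plan is to translate the existence of a continued fraction expansion into a statement about the action on $\P^1(F)$ of a certain subgroup of $\GL_2(\cO_F)$, and then to invoke Cooke's theorem on the elementary generation of $\SL_2(\cO_F)$ for rings with infinitely many units.

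To set this up, for each $q\in\cO_F$ let $M(q)=\begin{pmatrix} q & 1 \\ 1 & 0\end{pmatrix}$. A straightforward induction on $n$ shows that
\[
M(q_1)\cdots M(q_n)\begin{pmatrix} 1 \\ 0\end{pmatrix}=\begin{pmatrix} p_1\\ p_2\end{pmatrix}\qquad\text{with }\ [q_1,\dots,q_n]=p_1/p_2.
\]
Since $\det M(q)=-1$, the product lies in $\GL_2(\cO_F)$ and $p_1\cO_F+p_2\cO_F=\cO_F$. Moreover $M(q)=T(q)S$ with $T(q)=\begin{pmatrix}1&q\\0&1\end{pmatrix}$ and $S=M(0)=\begin{pmatrix}0&1\\1&0\end{pmatrix}$, and conjugation by $S$ also produces the lower elementary matrices $\begin{pmatrix}1&0\\q&1\end{pmatrix}$; hence the subgroup of $\GL_2(\cO_F)$ generated by the $M(q)$ contains the elementary subgroup $E_2(\cO_F)$ (together with $S$). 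A short calculation shows that $M(q)^{-1}=M(0)M(-q)M(0)$, so any element of this subgroup is a \emph{positive} product of $M(q)$'s.

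For the \emph{only if} direction, suppose every element of $F$ is a continued fraction. Given an ideal $I=a\cO_F+b\cO_F$ of $\cO_F$ with $b\neq 0$, apply the hypothesis to $x=a/b$ to write $x=p_1/p_2$ with $p_1\cO_F+p_2\cO_F=\cO_F$. The scalar $\lambda=p_1/a=p_2/b\in F^\times$ satisfies $\lambda I=\cO_F$, so $I$ is principal. For the \emph{if} direction, class number one together with B\'ezout lets me reduce to $a\cO_F+b\cO_F=\cO_F$ and produce some $N\in\SL_2(\cO_F)$ with first column $(a,b)^t$. If $N$ admits a factorization as a product of elementary matrices, the identities of the previous paragraph rewrite that factorization as $N=M(q_1)\cdots M(q_n)$, and applying both sides to $(1,0)^t$ yields $[q_1,\dots,q_n]=a/b=x$.

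The main obstacle is therefore precisely the elementary-generation statement $\SL_2(\cO_F)=E_2(\cO_F)$ in the hypothesis of the if direction. Unlike its higher-rank cousins $\SL_n$ with $n\geq 3$ (settled by Bass--Milnor--Serre under very mild hypotheses), the case $n=2$ genuinely requires the presence of infinitely many units; the proof, which I would simply cite from \cite{Co}, uses Dirichlet's unit theorem to clear out the entries of an arbitrary matrix one at a time via elementary operations. Everything else in the argument is bookkeeping around the matrix recursion obeyed by continued fractions.
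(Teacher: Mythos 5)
The paper does not actually prove this statement; it is quoted directly from \cite{Co} (Theorem 1, Corollary 3 and Proposition 13), so there is no internal proof to compare against. Your argument is essentially the one underlying that reference: the dictionary between continued fractions and products of the matrices $M(q)$, the observation that these generate $E_2(\cO_F)$ together with $M(0)$, and the hypothesis on $\cO_F^\times$ entering only through the Cooke--Vaser\v{s}tein elementary-generation theorem $\SL_2(\cO_F)=E_2(\cO_F)$, which it is legitimate to cite since that is the whole point of the attribution. The one step worth making explicit in the ``if'' direction is that the word in the $M(q_i)$ obtained from an elementary factorization may fail to define a continued fraction when an intermediate tail $[q_i,\dots,q_n]$ vanishes (division by zero in the recursive definition); this is repaired by absorbing interior zero quotients via $M(a)M(0)M(b)=M(a+b)$ and by truncating the associated division chain at the first zero remainder, so it is, as you say, only bookkeeping.
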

Unlike  in the classical case, the proof of this theorem is not
constructive. Indeed, the ring $\cO_F$ is not euclidean in
general,  which makes it hard to effectively compute  continued
fractions of elements in $F$. There is a huge amount of literature devoted to euclidean rings, a number of
generalizations and their relation with continued fractions algorithms, mainly motivated by their applications to the
 arithmetic of number fields. A survey of these topics can be found in~\cite{Le}.

Continued fractions in number fields also arise in the computation of Hecke eigenvalues of automorphic
forms, via the modular symbols algorithm. See for instance~\cite{cre} for the case of imaginary quadratic fields,
or~\cite{DeVo} for
an account in the setting of Hilbert modular forms. As a more recent application, we mention that the computation of
continued fractions turns out to be a
critical step in the effective computation of ATR points in elliptic curves over real quadratic fields of class number
$1$. ATR points are Stark-Heegner points defined over Almost Totally Real fields. See~\cite[\S4]{DL} for a
discussion of the role played by continued fractions in this kind of computations, and~\cite[Example 6]{De} for another
example of their use.

In this note we restrict ourselves to the case of $F$ being a real
quadratic number field of class number $1$. We build on the approach taken
by Cooke in \cite{Co}, and we provide an algorithm that (under an appropriate version of the Generalized Riemann
Hypothesis) computes a continued fraction of any element $x\in F$. To
be more precise, we circumvent the problem that most of these
fields are not euclidean by exploiting the  property of being $2$-stage euclidean.

A \emph{$1$-stage division chain} for a pair of elements
$\alpha,\beta\in\cO_F$ is a pair $(q_1,r_1)$  of elements in $\cO_F$ satisfying $\alpha=q_1\beta+r_1$. A \emph{$2$-stage division chain}  is a
quadruple $(q_1,q_2,r_1,r_2)$ of elements in $\cO_F$ satisfying:
\begin{align*}
\alpha &= q_1\beta+r_1\\
\beta &= q_2r_1 +r_2.
\end{align*}
A field $F$ is said
to be \emph{$2$-stage
euclidean} (with respect to the norm $\Nm$ of $F/\Q$) if any pair of elements $\alpha,\beta\in \cO_F$, with $\beta\neq
0$, has a \emph{$k$-stage decreasing chain} for $k\leq 2$; that is, a $k$-stage division chain as above satisfying the
additional property
\[
|\Nm(r_k)|<|\Nm(\beta)|.
\]
In general, the notion of $n$-stage euclideanity is defined by means of division chains
of length at most $n$, but it is enough for our purposes to restrict to $n=2$.

Given a $2$-stage euclidean field $F$, (in fact, any $k$-stage euclidean field), a variation of the usual proof of the
fact that euclidean implies class number $1$ shows that $F$ is of class number $1$. Conversely, it is expected that all
real quadratic fields of class number $1$ are $2$-stage euclidean.
Indeed, in \cite{CW} it is proven that if certain Generalized Riemann Hypothesis holds then every real
quadratic field of class number $1$ is $2$-stage euclidean. In spite of this result, up to now only a few real
quadratic fields have been proven  to be $2$-stage euclidean: as reported in  \cite[p. 14]{Le}, they are the fields
$\Q(\sqrt{m})$ with $m$ belonging to the set
\begin{small}
\begin{align}\label{eq: known real quadratic 2-stage}
&  \{{\bf 2},{\bf 3},{\bf 5},{\bf 6},{\bf 7},{\bf 11},{\bf 13},14,{\bf 17},{\bf 19},{\bf 21}, 22, 23,{\bf 29}, 31,{\bf
33},{\bf 37}, 38,{\bf 41}, 43,\\\nonumber
& 46,47, 53,{\bf 57}, 59, 61, 62,
67, 69, 71,{\bf 73}, 77, 89, 93, 97, 101, 109, 113,129,\\\nonumber
  & 133, 137, 149, 157, 161, 173, 177, 181, 193, 197, 201, 213, 253\},\nonumber
\end{align}
\end{small}
where those numbers appearing in bold face correspond to the complete list of norm-euclidean rings. The  purpose of this note is to present an algorithm for checking  $2$-stage euclideanity of real quadratic fields,
thus allowing the computation of continued fractions. The main result of the article is the following theorem.
\begin{maintheorem}\label{main theorem}
There exists an algorithm that:
\begin{enumerate}[(i)]
 \item\label{mainthm1} accepts as input a real quadratic field $F$; if $F$ is $2$-stage euclidean the algorithm terminates and it proves
that $F$ is $2$-stage euclidean.
\item\label{mainthm2}  if $F$ is $2$-stage euclidean, after finishing step $(i)$ it accepts as input any $x\in F$ and it computes a
continued fraction with
coefficients in $\cO_F$ for $x$.
\end{enumerate}
\end{maintheorem}

\begin{remark}
If the field $F$ is not $2$-stage euclidean then the algorithm will not terminate. However, in an actual implementation
the algorithm  would either run out of memory or break due to rounding errors. However, as expected, we haven't been
able to observe this phenomenon because all tested fields are indeed $2$-stage euclidean.
\end{remark}

As will be explained in more detail in the subsequent sections, for a
given field $F$ what  step (\ref{mainthm1}) does is a
precomputation of the data that is needed in order to compute a $2$-stage decreasing chain for any $\alpha,\beta\in
\cO_F$ with
$\beta\neq 0$. If
$F$ is $2$-stage euclidean the algorithm succeeds in this precomputation, which  at the same time constitutes  a proof
that
$F$ is $2$-stage euclidean. Then the data generated in (\ref{mainthm1}) is used  in (\ref{mainthm2}) to
compute the continued fraction of any $x\in F$.

An implementation of the algorithm has been submitted as a patch to  Sage, and it  is available at
\url{http://trac.sagemath.org/sage_trac/ticket/11380}. As an application, we have
extended  list \eqref{eq: known real quadratic 2-stage} by proving that all real quadratic fields of class
number $1$ and discriminant up
to $8000$ are $2$-stage euclidean.

The plan of the paper is as follows: In Section \ref{section: Continued fractions and 2-stage euclidean fields} we
recall the basic notions of $2$-stage euclidean fields and their relation with the computation of continued fractions.
In Section \ref{section: The algorithm} we describe and prove the correctness of the algorithm in Main Theorem \ref{main
theorem}.  In Section \ref{section:tables} we comment on some implementation details and we also include some data arising from numerical
experiments. As we will see, they suggest a measure of euclideanity for quadratic fields that, to the best of our
knowledge, have not been considered before.

It is a pleasure to thank Jordi Quer for his comments on an earlier version of the
 manuscript. We are also grateful to the referee for valuable observations and suggestions.

\section{Continued fractions and $2$-stage euclidean fields}\label{section: Continued fractions and 2-stage euclidean
fields}
We begin this section by recalling the basic definitions and properties  that we will use. Let $m$ be a positive
squarefree integer and let  $F=\Q(\sqrt m)$. Let $\omega=(1+\sqrt m)/2$ if $m\equiv 1 \mod 4$ and
 $w=\sqrt{m}$ if $m\equiv 2,3 \mod 4$, so that the ring of integers is $\cO_F=\Z +\Z\omega$. Let $\alpha/\beta$ be an
element of $F$. If $F$ is $2$-stage euclidean one can find a
$k$-stage decreasing chain for the pair $\alpha,\beta$ with $k\leq 2$. If the last residue $r_k$ is not zero, one can then
repeat this process to end with a division chain
\begin{align*}
\alpha &= q_1\beta+r_1\\
\beta &= q_2r_1 +r_2,\\
r_1&=q_3r_2+r_3\\
r_2&=q_4r_3+r_4\\
&\vdots\\
r_{n-2}&=q_nr_{n-1}+r_n
\end{align*}
with $r_n=0$, because the norm of the corresponding residue decreases in absolute value at worst every two steps. The 
classical
formulas for the
case of rational numbers (cf. \cite[\S 10.6]{HW}) show that  $\alpha/\beta$ is then equal to the continued fraction
$[q_1,\dots,q_n]$. Therefore, part (\ref{mainthm2}) of Main Theorem \ref{main theorem} is straightforward if one can compute
$2$-stage decreasing chains for arbitrary pairs of elements in $\cO_F$. We remark, however, that $\alpha/\beta$ may
admit many different
representations as a
continued fraction, since $2$-stage decreasing chains are not unique.

The following is a particular case of \cite[Corollary 1]{Co}.
\begin{proposition}\label{proposition: condition of existence of a 2-chain}
 There exists a $2$-stage decreasing chain for $\alpha,\beta$  if and only if there exists a
continued
fraction of length $2$, say $[q_1,q_2]$, such that
$$\left |\Nm\left(\frac{\alpha}{\beta}-[q_1,q_2])\right)\right |<\frac{1}{|\Nm(q_2)|}.$$
\end{proposition}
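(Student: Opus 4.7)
The plan is to establish the equivalence by a direct algebraic identity relating the last residue $r_2$ of a purported $2$-stage chain to the distance $\alpha/\beta - [q_1,q_2]$.

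First I would introduce, for any candidate pair $(q_1,q_2) \in \cO_F^2$ with $q_2 \neq 0$, the natural residues
\begin{align*}
r_1 &:= \alpha - q_1\beta, \\
r_2 &:= \beta - q_2 r_1 = \beta - q_2\alpha + q_1 q_2 \beta.
\end{align*}
By construction $(q_1,q_2,r_1,r_2)$ is a $2$-stage division chain for $(\alpha,\beta)$, and conversely every $2$-stage division chain arises this way (the residues are determined by the quotients). Thus the existence of a $2$-stage \emph{decreasing} chain is equivalent to the existence of $q_1,q_2 \in \cO_F$ with $|\Nm(r_2)| < |\Nm(\beta)|$.

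Next I would compute the key identity. Writing $[q_1,q_2] = q_1 + 1/q_2 = (q_1 q_2 + 1)/q_2$, a direct manipulation gives
\[
\frac{\alpha}{\beta} - [q_1,q_2] \;=\; \frac{q_2\alpha - (q_1 q_2 + 1)\beta}{q_2\beta} \;=\; \frac{q_2(\alpha - q_1\beta) - \beta}{q_2\beta} \;=\; \frac{q_2 r_1 - \beta}{q_2\beta} \;=\; -\frac{r_2}{q_2\beta}.
\]
Taking the absolute value of the norm and using its multiplicativity yields
\[
\left|\Nm\!\left(\frac{\alpha}{\beta} - [q_1,q_2]\right)\right| \;=\; \frac{|\Nm(r_2)|}{|\Nm(q_2)|\cdot|\Nm(\beta)|}.
\]

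Finally I would read off the equivalence: the inequality $|\Nm(r_2)| < |\Nm(\beta)|$ holds if and only if the right-hand side above is strictly less than $1/|\Nm(q_2)|$, which is precisely the stated condition. Since every choice of $(q_1,q_2)$ produces a division chain and, conversely, every division chain records its quotients, this equivalence transfers verbatim between the existence of a $2$-stage decreasing chain and the existence of $[q_1,q_2]$ satisfying the norm inequality. There is no real obstacle here beyond the bookkeeping of the identity; the proposition is essentially a reformulation of the definition once the fraction $\alpha/\beta - [q_1,q_2]$ is expressed in closed form in terms of $r_2$.
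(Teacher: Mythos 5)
Your proof is correct. The paper itself gives no argument for this proposition; it simply quotes it as a special case of Corollary 1 of Cooke's paper \cite{Co}, so your direct computation fills in a proof that the authors leave to the reference. The identity $\frac{\alpha}{\beta}-[q_1,q_2]=-\frac{r_2}{q_2\beta}$ is exactly the right observation, and multiplicativity of the norm turns the decreasing condition $|\Nm(r_2)|<|\Nm(\beta)|$ into the stated inequality and back. The only point worth making explicit is the matching of the two existential quantifiers at $q_2=0$: a continued fraction of length $2$ presupposes $q_2\neq 0$, while a priori a $2$-stage division chain could have $q_2=0$; but then $r_2=\beta$ and the chain cannot be decreasing, so every $2$-stage \emph{decreasing} chain has $q_2\neq 0$ and does correspond to a genuine $[q_1,q_2]\in\CF_2$. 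With that one-line remark added, the argument is complete.
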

Let $v_1,v_2$ denote the two embeddings of $F$ into $\R$, and let  $v=(v_1,v_2)\colon F\hookrightarrow\R^2$. Concretely,
it is given by $1\mapsto (1,1)$ and $\sqrt{m}\mapsto
(\sqrt{m},-\sqrt{m})$.
We will often identify $F$ with $v(F)$, even without explicitly mentioning $v$. The norm of $F$
extends to $\R^2$ via the formula $$\Nm(x,y)=xy,\ \ \text{for}\  x,y\in \R.$$

Let $\CF_2$ denote the set of continued fractions of length at most $2$. Any element in $\cO_F$ can be expressed
as a continued fraction of length $2$, so $\CF_2$ is also the set of continued fractions of length exactly $2$. For a
positive integer $n$, let
\[
 \CF_2(n)=\{q=[q_1,q_2]\in \CF_2\colon |\Nm(q_2)|\leq n\}.
\]
For $q=[q_1,q_2]\in \CF_2$ define
\begin{equation*}
 V(q)=\left\{x\in \R^2 \ \colon \ |\Nm(x-q)| <\frac{1}{| \Nm(q_2)| }\right\}.
\end{equation*}
In the subsequent sections it will be useful to refer to $q_2$ as the \emph{denominator of $V(q)$}. The region $V(q)$ is
bounded by the hyperbolas
\begin{equation*}
 (x-x_0)(y-y_0)= \frac{\pm 1}{|\Nm (q_2)|},
\end{equation*}
where $(x_0,y_0)=v(q)$. From Proposition \ref{proposition: condition of existence of a 2-chain}   we see that $F$ is
$2$-stage euclidean if and only if $\R^2$ can be covered by open sets of the form   $V(q)$, with $q\in \CF_2$. The
knowledge of such a
covering also translates into a method for computing a $2$-stage decreasing chain for a pair $\alpha,\beta$: if
$\alpha/\beta$
belongs to $V([q_1,q_2])$, then $q_1$ and $q_2$ are the quotients of such a chain.

Let $\gamma$ be an element in $\cO_F$. Then $x$ belongs to $ V([q_1,q_2])$ if and only if $(x-\gamma)$ belongs to $
V([q_1-\gamma,q_2])$. So
instead of  $x\in F$ one can work with $\overline x$, its class modulo $\cO_F$, which as an element of $\R^2$ lies in
 the fundamental domain
\[
 D=\{av(1)+bv(w)\ \colon \ a,b\in [0,1)\}.
\]
The advantage is that $\overline D$ is compact, so finite coverings are enough.
\begin{proposition}\label{proposition: finitely many sets}
The quadratic field $F$ is $2$-stage euclidean if and only if $D$ can be covered by finitely many hyperbolic regions
$V(q)$ with $q$
belonging to $\CF_2$.
\end{proposition}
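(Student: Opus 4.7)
My plan is to split the proof into the backward and forward directions, with the backward one being essentially a translation argument and the forward one requiring a compactness plus extension argument.

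For the backward direction, assume that $D \subseteq V(q^{(1)}) \cup \cdots \cup V(q^{(N)})$ with each $q^{(i)} = [a_i, b_i] \in \CF_2$. Given $\alpha, \beta \in \cO_F$ with $\beta \neq 0$, I would set $x := \alpha/\beta$ and pick $\gamma \in \cO_F$ so that $x - \gamma \in D$. Then $x - \gamma \in V(q^{(i)})$ for some index $i$. A direct computation using the definition of $V$ gives the translation identity $V([a, b]) + \gamma = V([a + \gamma, b])$ for $\gamma \in \cO_F$; applying it yields $x \in V([a_i + \gamma, b_i])$, with $[a_i + \gamma, b_i] \in \CF_2$. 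Proposition \ref{proposition: condition of existence of a 2-chain} then produces a $2$-stage decreasing chain for $\alpha, \beta$, so $F$ is $2$-stage euclidean.

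For the forward direction, assume $F$ is $2$-stage euclidean. The strategy has two ingredients. First, establish
\begin{equation*}
\bar D \subseteq U := \bigcup_{q \in \CF_2} V(q). \tag{$\ast$}
\end{equation*}
Second, use that $\bar D$ is compact and each $V(q)$ is open: any open cover of $\bar D$ admits a finite subcover, and such a finite subcover also covers $D \subseteq \bar D$. The finite cover of $D$ by hyperbolic regions $V(q)$ then follows.

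The main obstacle is $(\ast)$. By Proposition \ref{proposition: condition of existence of a 2-chain}, every $x \in F$ lies in some $V(q)$, so $F \subseteq U$. Since $F \cap D$ is dense in $\bar D$, this gives $\bar D \subseteq \bar U$, but density alone does not force every $z \in \bar D$ into the open set $U$. To bridge this gap I would approximate $z \in \bar D$ by a sequence $y_n \in F \cap \bar D$ with $y_n \to z$; each $y_n$ belongs to some $V(q_n)$ with $q_n = [a_n, b_n]$, and I would argue that for a suitable choice of approximation, $z$ itself lies in $V(q_n)$ for large $n$. The delicate point is that the regions $V(q_n)$ can shrink (in the transverse direction) as $|\Nm(b_n)|$ grows, so one has to control the position of $y_n$ inside $V(q_n)$ well enough that the open condition $|\Nm(z - q_n)| < 1/|\Nm(b_n)|$ also holds. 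An alternative approach, yielding the stronger statement $\R^2 \subseteq U$, would be to exploit the lattice structure of $\cO_F$ in $\R^2$ together with the action of the infinite unit group $\cO_F^\times$ on the norm form to produce, for any $z \in \R^2$, a pair $(a, b) \in \cO_F \times (\cO_F \setminus \{0\})$ realizing the inequality of Proposition \ref{proposition: condition of existence of a 2-chain}. Once $(\ast)$ is secured, the compactness step is automatic and the proposition follows.
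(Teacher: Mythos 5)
Your backward direction is correct and coincides with the paper's (implicit) argument: the translation identity $V([a,b])+\gamma=V([a+\gamma,b])$ for $\gamma\in\cO_F$ reduces an arbitrary $\alpha/\beta\in F$ to its representative in $D$, and Proposition~\ref{proposition: condition of existence of a 2-chain} converts membership in some $V(q)$, $q\in\CF_2$, into a $2$-stage decreasing chain. This is the direction the algorithm actually uses to certify $2$-stage euclideanity, and it is complete as you wrote it.

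The forward direction, however, is left with a genuine gap, and you have located it precisely: the inclusion $(\ast)$, namely that every point of $\overline D$ --- including points of $\R^2$ not of the form $v(x)$ for $x\in F$ --- lies in $U=\bigcup_{q\in\CF_2}V(q)$. Proposition~\ref{proposition: condition of existence of a 2-chain} only yields $v(F)\subseteq U$, and since $U$ is merely open and dense this does not force $\overline D\subseteq U$: an open dense set can miss a nonempty closed null set, and for a sequence $y_n\to z$ with $y_n\in V(q_n)$ the regions $V(q_n)$ can degenerate into ever thinner slivers near $z$ (their transverse width is of order $|\Nm(q_2)|^{-1/2}$, and the centers can escape along the unit orbit), so neither of your two sketched repairs is carried out nor obviously completable --- the second one, via the action of $\cO_F^\times$ on the norm form, is essentially the hard content of the Cooke--Weinberger analysis rather than a routine step. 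You should know that the paper does not supply this step either: it passes from ``every $x\in F$ lies in some $V(q)$'' to ``$\R^2$ can be covered by the sets $V(q)$'' with the words ``we see that,'' and then extracts the finite cover of $D$ exactly as you propose, by compactness of $\overline D$ and openness of the $V(q)$. So your proposal reproduces the paper's architecture faithfully and, to its credit, isolates the one assertion the paper makes without justification; but as a proof it is incomplete until $(\ast)$ is actually established, and since the forward implication is what guarantees termination of the algorithm (the boxes tested by \textbf{SOLVE} contain points outside $v(F)$), the gap cannot be sidestepped by restricting attention to $F\cap D$.
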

From Proposition \ref{proposition: finitely many sets} we can already see the idea of an algorithm for checking whether
$F$ is $2$-stage euclidean. It is easy to define  an ordering on the set of continued fractions
$q=[q_1,q_2]\in \CF_2$. One can then generate such $q$'s in order,  and check at each step whether the sets
$V(q)$ for the $q$ generated so far already cover $D$. If $F$ is $2$-stage euclidean this process will necessarily
finish,
producing a
finite list of $V(q)$'s
that cover $D$. One can then compute  a $2$-stage decreasing chain
for a pair $\alpha,\beta$ by finding a $V(q)$ that contains $\overline{\alpha/\beta}$.

 However, working with all the sets $V(q)$ for $q\in \CF_2$ is readily seen to be computationally unfeasible.
Therefore, one wants to work only with a few of the possible centers, but in a way that the algorithm is still
guaranteed to finish. This is essentially what our algorithm does. At this point we remark that the algorithm presents
two critical points:
\begin{enumerate}
 \item How to choose the centers $q$ for the regions $V(q)$ to be considered.
  \item How to check, algorithmically, whether  a collection of sets $V(q)$  covers $D$.
\end{enumerate}
The next section is devoted to discuss in detail the algorithm and the implementation of these two steps.

\section{The algorithm}\label{section: The algorithm}
In this section we address the two main points raised at the end of the previous section. The centers  that will be
considered come from the observation that, for each positive integer $N$, there are  finitely many elements
$q=[q_1,q_2]$ inside the fundamental domain $D$ with $|\Nm(q_2)|\leq N$. We will take small translates of these centers
by elements of $\cO_F$, which moves them outside $D$, but as long as the corresponding regions still
intersect $D$. The following definitions and results make this more precise.

Given a positive integer $N$, denote by $Q_{N}$ the set
consisting of continued fractions $q=[q_1,q_2]$ of length  two with $|\Nm(q_2)|\leq N$ and such that $q$
belongs to $D$:
\[
Q_{N}=\{q=[q_1,q_2] \ \colon |\Nm(q_2)|\leq N\text{ and } q\in D\}.
\]
For a positive integer $T$, we also define the following set of translates of elements in $Q_N$:
\[
Q_{T,N}=\{q=a+b\omega \in Q_N+\cO_F \ \colon |b|< T\text{ and } V(q)\cap  D\neq\emptyset\}.
\]
\begin{proposition}\label{Qs are finite and computable}
The sets $Q_N$ and $Q_{T,N}$ are finite and effectively computable.
\end{proposition}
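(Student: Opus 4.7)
The proof divides into two claims (finiteness and effective computability), and it is natural to handle $Q_N$ first since $Q_{T,N}$ reduces to it.

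For $Q_N$, the key observation is that if $q = q_1 + 1/q_2$ with $q_1, q_2 \in \cO_F$, then $q_2 q = q_2 q_1 + 1 \in \cO_F$, and multiplying by the conjugate gives $\Nm(q_2) \cdot q = \bar{q_2}(q_2 q) \in \cO_F$. Hence the rational denominator of $q$ divides $|\Nm(q_2)|$, and so is at most $N$. This gives
\[
 Q_N \;\subset\; D \;\cap\; \bigcup_{d=1}^{N} \tfrac{1}{d}\cO_F ,
\]
a finite union of rank-two lattices in $\R^2$ intersected with a bounded parallelogram, hence a finite set. For effective enumeration, for each $d \in \{1, \ldots, N\}$ I would list the points of $\tfrac{1}{d}\cO_F \cap D$ by standard lattice enumeration, and for each candidate $q = \alpha/d$ test whether some $q_1 \in \cO_F$ makes $\alpha - dq_1$ divide $d$ in $\cO_F$ with $|\Nm(d/(\alpha - dq_1))| \leq N$. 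Class number one lets us enumerate the divisors of $d$ in $\cO_F$ up to units; for each such divisor $\delta$ the congruence $\alpha \equiv \delta u \pmod{d\cO_F}$, with $u$ ranging over units modulo $d$, is a finite check.

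For $Q_{T,N}$, write an element as $q = q_0 + \gamma$ with $q_0 \in Q_N$ and $\gamma \in \cO_F$. The bound $|b| < T$ on the $\omega$-coefficient of $q$ forces the $\omega$-coefficient of $\gamma$ into a finite range, since $q_0$ has $\omega$-coefficient in $[0,1)$. For each admissible value of this coefficient, as the $\Z$-coefficient $c$ of $\gamma$ varies, the embedded point $v(q)$ translates along the direction $v(1) = (1,1)$ in $\R^2$; once $|c|$ is large enough, both coordinates of $v(q)$ lie on the same side of, and far from, the projections of $D$, so $|\Nm(x-q)|$ is uniformly large on $D$ and $V(q) \cap D = \emptyset$. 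Only finitely many $\gamma$ remain, and each is tested directly.

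The main obstacle I anticipate is making every test genuinely effective rather than just finite. The two nontrivial pieces are the algebraic test that $(q - q_1)^{-1} \in \cO_F$, which reduces via class number one to divisor enumeration in $\cO_F$, and the geometric test $V(q) \cap D \neq \emptyset$, which requires a robust routine for intersecting a hyperbolic region with the parallelogram $D$. The finiteness statements themselves follow quickly from lattice-in-a-bounded-region considerations.
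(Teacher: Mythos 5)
Your proof is correct, and its overall structure (treat $Q_N$ first, then reduce $Q_{T,N}$ to finitely many $\cO_F$-translates of each element of $Q_N$) matches the paper's. The $Q_{T,N}$ half is essentially identical to the paper's: there too the bound $|b|<T$ leaves finitely many choices for the $\omega$-part of the translate, and for each such choice the paper shows $V(q)$ is contained in the union of two strips of width $2|\Nm(q_2)|^{-1/2}$ about the lines $x=x_0+d$ and $y=y_0+d$, which meet the bounded set $D$ for only finitely many $d$ --- this is just the contrapositive of your remark that both factors of $\Nm(x-q)$ are large once the integer translate $c$ is large. For $Q_N$ you take a genuinely different (though related) route. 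The paper invokes class number one at the finiteness stage: any $q_2$ with $|\Nm(q_2)|\le N$ generates one of finitely many principal ideals $(\alpha_i)$, so $1/q_2\in\frac{1}{\alpha_i}\cO_F$ and $Q_N$ lands inside the finite, directly computable set $\bigcup_i\bigl(\cO_F+\frac{1}{\alpha_i}\cO_F\bigr)/\cO_F\simeq\bigcup_i\cO_F/\alpha_i\cO_F$, with no further filtering required beyond choosing representatives in $D$. You instead use the coarser containment $q\in\frac{1}{|\Nm(q_2)|}\cO_F$ (from $\Nm(q_2)q=\overline{q_2}(q_2q_1+1)\in\cO_F$), which makes finiteness more elementary --- no PID input is needed there --- but shifts the real work to the filtering step, where you must decide whether a given lattice point $\alpha/d\in D$ is actually of the form $[q_1,q_2]$ with $|\Nm(q_2)|\le N$. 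Your reduction of that test to enumerating divisors of $d$ in $\cO_F$ up to units and checking congruences $\alpha\equiv\delta u\pmod{d\cO_F}$, with $u$ ranging over the finite computable image of $\cO_F^\times$ in $(\cO_F/d\cO_F)^\times$, is valid, and this is where class number one re-enters for you. In short: the paper's candidate set is tighter and needs no membership test, while yours is easier to see is finite but costlier to enumerate exactly; both are correct and effective.
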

\begin{proof}
First we consider the set $Q_N$. There is a finite number of ideals of norm bounded by $N$,
and there are algorithms to compute them. Since $\cO_F$ is a principal ideal domain, the set of ideals of norm
 up to $N$ is of the form
\[
\{(\alpha_1),\ldots,(\alpha_k)\},
\]
for some (non-canonical) choice of representatives $\alpha_i$. If $\beta$ is any element of norm less than or equal to $
N$, it must be of the
form $\beta=u\alpha_i$ for some $i$  and some unit $u$. Therefore:
\[
\frac 1\beta=\frac{1}{u\alpha_i}=\frac{u^{-1}}{\alpha_i}\in \frac{1}{\alpha_ i}\cO_F.
\]
Since we are looking for representatives modulo the action of the additive group $\cO_F$, all  elements of $Q_{N}$ are
to be
found in
\[
 \bigcup_{i=1}^k \left(\cO_F+\frac{1}{\alpha_i}\cO_F\right)/\cO_F\simeq \bigcup_{i=1}^k \cO_F/\alpha_i\cO_F,
\]
which is finite and computable.

Given positive integers $T$ and $N$, and an element $q=[q_1,q_2]\in Q_{N}$, define the set $Q_{q,T,N}$ as:
\[
Q_{q,T,N}=\{a+b\omega\in q+\cO_F\colon |b|<T,\,V(a+b\omega)\cap D\neq \emptyset\}.
\]
To prove the finiteness of $Q_{T,N}$ it is enough to show that the sets $Q_{q,T,N}$ are finite for each $q\in Q_N$.
Write $q=r+s\omega$ for some $r,s\in\Q$. If $q'$ is an element of $Q_{q,T,N}$, one can write $q'=r+d+(s+t)\omega$ where
$d$ and $t$ belong to $\Z$.
Since the absolute value of $s+t$ needs to be bounded by $T$ and $s$ is fixed, there is a finite number of possible
choices for $t$.
It remains to show  that for each value of $t$ there are finitely many possibilities for $d$. Let
$v(r+(s+t)\omega)=(x_0,y_0)$. Then $v(q')=v(r+d+(s+t)\omega)=(x_0+d,y_0+d)$. The
hyperbolic region $V(q')$ is contained in the union of two  strips in $\R^2$:
\[
V(q')\subset R_{d}^x\cup R_{d}^y,
\]
where
\[
R_{d}^x=\{(x,y)\in\R^2\colon |(x-x_0-d)|<|\Nm(q_2)|^{-1/2}\},
\]
\[
R_{d}^y=\{(x,y)\in\R^2\colon |(y-y_0-d)|<|\Nm(q_2)|^{-1/2}\},
\]
Since $x_0$ and $y_0$ are fixed, it is clear that $R_d^x\cup R_d^y$ intersects $D$ for finitely many values of $d$.
\end{proof}

The following lemma allows us to prove that a certain region of $\R^2$ is covered by hyperbolic regions by doing a
finite amount of computation. Its proof is elementary and follows easily from the shape of the regions $V(q)$.
\begin{lemma}
\label{lemma:box}
  Let $R$ be a box in $\R^2$ of the form:
\[
R=R(x_0,x_1,y_0,y_1)=\{(x,y)\in \R^2 \colon x_0\leq x\leq x_1 \text{ and } y_0\leq y\leq y_1\}.
\]
Then $R$ is  contained in $V(q)$ if each of its four corners is.
\end{lemma}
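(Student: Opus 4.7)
The plan is to argue by contrapositive: I will show that if some point $p=(a,b)\in R$ lies outside $V(q)$, then at least one corner of $R$ must also lie outside $V(q)$. After translating coordinates so that $v(q)=(0,0)$, the set $V(q)$ becomes $\{(x,y)\in\R^2 : |xy|<c\}$, where $c=1/|\Nm(q_2)|$. Its complement $\{(x,y):|xy|\geq c\}$ consists of four connected branches, one sitting inside each of the four closed quadrants; the key feature I will exploit is that each branch is \emph{monotone} in the sense that moving farther from the origin while staying in the same quadrant keeps one in the branch.

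First I would record the trivial monotonicity facts: if $a,b\geq 0$ and $x\geq a$, $y\geq b$, then $xy\geq ab$; if $a,b\leq 0$ and $x\leq a$, $y\leq b$, then $xy\geq ab$; if $a\geq 0$, $b\leq 0$, $x\geq a$, $y\leq b$, then $xy\leq ab$; and analogously in the fourth case. In each case $|xy|\geq |ab|$ with the same sign of the product, so the larger point lies in the same branch of the complement as the smaller one.

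With these in hand, suppose $p=(a,b)\in R$ satisfies $|ab|\geq c$. Pick the corner $(x_i,y_j)$ of $R$ which is extremal in the same sign direction as $(a,b)$: specifically, $(x_1,y_1)$ if $a,b\geq 0$; $(x_0,y_0)$ if $a,b\leq 0$; $(x_1,y_0)$ if $a\geq 0$, $b\leq 0$; and $(x_0,y_1)$ if $a\leq 0$, $b\geq 0$. By the monotonicity observations, $|x_iy_j|\geq |ab|\geq c$, so this corner also lies outside $V(q)$. Contrapositively, if all four corners of $R$ lie in $V(q)$, then every point of $R$ does, which is the claim.

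The main (and only) obstacle is bookkeeping the four sign cases correctly so that the chosen corner sits in the same branch of the complement as $p$. This is elementary once one has the picture of the four hyperbolic branches in mind, so no real difficulty arises; the lemma is essentially the statement that each branch of a hyperbola $xy=\pm c$ is a monotone curve in its quadrant, together with the observation that a coordinate box is itself a product of intervals.
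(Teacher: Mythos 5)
Your proof is correct and rests on the same elementary fact as the paper's: after translating $v(q)$ to the origin, $|xy|$ is monotone in $|x|$ and $|y|$, so the supremum of $|xy|$ over the box is attained at a corner. The paper phrases this directly via $|xy|\leq \max\{|x_0|,|x_1|\}\cdot\max\{|y_0|,|y_1|\}$ (which is one of the four corner products), avoiding your explicit four-way sign case analysis, but the two arguments are essentially identical.
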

\begin{proof}
Doing a translation in $\R^2$ we may and do assume that $V(q)$ is of the form:
\[
V(q)=\left\{(x,y)\in \R^2~\colon~ |xy|<\epsilon\right\},
\]
for some positive $\epsilon$. If the four corners of $R$ are contained in $V(q)$, then:
\[
|x_iy_j|<\epsilon,\quad i,j\in\{0,1\}.
\]
Given $(x,y)$ belonging to $R$, we have that:
\[
|x|\leq \max\{|x_0|,|x_1|\},
\]
and that
\[
|y|\leq \max\{|y_0|,|y_1|\}.
\]
Therefore:
\[
|xy|=|x||y|\leq \max\{|x_0|,|x_1|\}\max\{|y_0|,|y_1|\}<\epsilon.
\]
\end{proof}

Let $R_0$ be a box as in Lemma~\ref{lemma:box} such that $R_0$ contains the fundamental domain $D$ of $F$. For each
positive integer $n$, subdivide $R_0$ into $4^n$ identical boxes, and let $S(R_0)_n$ be the set of these. The following
result plays a crucial role in showing the correctness of the algorithm.
\begin{lemma}
\label{lemma:2stage-criterion}
The field $F$ is $2$-stage euclidean if and only if there exists a finite set $Z$ of hyperbolic regions and a positive
integer $L$ such that each box in $S(R_0)_L$ is contained in at least one of the regions of $Z$.
\end{lemma}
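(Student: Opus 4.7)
Since this is a biconditional, I would split it into two implications. The backward direction is essentially immediate from Proposition \ref{proposition: finitely many sets}: if a finite set $Z$ of hyperbolic regions covers every box in $S(R_0)_L$, then the union of these regions covers $R_0$, hence in particular the fundamental domain $D\subset R_0$; therefore $D$ admits a finite covering by regions $V(q)$ with $q\in\CF_2$, which by Proposition \ref{proposition: finitely many sets} is equivalent to $F$ being $2$-stage euclidean.

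For the forward direction, I would start from a finite covering of $D$ by regions $V(q_1),\ldots,V(q_k)$ with $q_i\in\CF_2$, provided by the same proposition. The first step is to upgrade this to a covering of $R_0$ by exploiting the translation invariance of hyperbolic regions: for any $\gamma\in\cO_F$ and any $q=[a,b]\in\CF_2$, one has $q+\gamma=[a+\gamma,b]\in\CF_2$, and $V(q+\gamma)=V(q)+v(\gamma)$, since only the center of the region moves (the denominator, and hence the shape, is unchanged). Hence the family $\{V(q_i+\gamma):1\le i\le k,\ \gamma\in\cO_F\}$ covers all of $\R^2$, and the compactness of $R_0$ allows me to extract a finite subcollection $Z$ that still covers $R_0$.

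The second step shrinks the boxes so that each fits inside a single region of $Z$. Since $Z$ is a finite open cover of the compact set $R_0$, the Lebesgue number lemma produces a $\delta>0$ such that every subset of $R_0$ of diameter less than $\delta$ is contained in some member of $Z$. A box in $S(R_0)_L$ has diameter proportional to $2^{-L}$, so any sufficiently large $L$ works; this $L$ together with $Z$ provides the data required by the statement.

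The main subtlety, and the step I would verify most carefully, is the translation invariance of the regions together with the closure of $\CF_2$ under translation by $\cO_F$; once this is in place the rest is a standard compactness and Lebesgue-number argument. Lemma \ref{lemma:box} is not needed for the proof of Lemma \ref{lemma:2stage-criterion} itself, but it is what makes the containment conditions algorithmically checkable in an actual implementation.
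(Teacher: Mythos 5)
Your proof is correct, and both directions share the overall skeleton of the paper's argument: the backward implication is the same immediate observation, and the forward implication in both cases first produces a finite open cover $Z$ of $R_0$ by compactness and then shows that all boxes at a sufficiently deep subdivision level fit inside single members of $Z$. The differences are in how each half of the forward direction is carried out. For the covering of $R_0$, the paper invokes directly that $\R^2$ (hence $R_0$) is covered by the regions $V(q)$, $q\in\CF_2$ (this is the content of the discussion following Proposition \ref{proposition: condition of existence of a 2-chain}), whereas you rebuild it from a finite cover of $D$ via the translation identity $V([q_1+\gamma,q_2])=V([q_1,q_2])+v(\gamma)$; both are fine, and your explicit check of closure of $\CF_2$ under $\cO_F$-translation is exactly the observation the paper records just before Proposition \ref{proposition: finitely many sets}. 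For the choice of $L$, you invoke the Lebesgue number lemma together with the fact that boxes in $S(R_0)_L$ have diameter $O(2^{-L})$, while the paper gives a self-contained, hands-on substitute: for each non-corner point $x$ it picks a dyadic box $R_x\in S(R_0)_{n_x}$ with $x\in R_x\subseteq V_x\in Z$, extracts a finite subcover by the interiors of these boxes, and takes $L$ to be the maximum of the levels $n_{x_i}$, using the nesting of the dyadic subdivision to conclude. Your route is shorter and avoids the bookkeeping with the dense set $U$ of non-corner points; the paper's route keeps everything inside the combinatorics of the box subdivision that the recursive algorithm actually traverses. Your closing remark is also accurate: Lemma \ref{lemma:box} plays no role in the proof of Lemma \ref{lemma:2stage-criterion} and is only needed to make the containment tests finite in the implementation.
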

\begin{proof}
First assume that $F$ is $2$-stage euclidean. Therefore the box $R_0$ can be covered by hyperbolic regions $V(q)$. Since
these are
open and $R_0$ is compact, there is a finite set of hyperbolic regions $Z$ such that $R_0$ is covered by  regions in
$Z$.

Let $S(R_0) =\cup_{n=1}^\infty S(R_0)_n$ and let $U\subset R_0$ be the complement of the set of
corners:
\[
U=R_0\setminus \bigcup_{R=R(x_0,x_1,y_0,y_1)\in S(R_0)}\{(x_0,y_0),(x_0,x_1),(x_1,y_0),(x_1,y_1)\}.
\]
Note that $U$ is dense in $R_0$. For each point $x$ in $U$, let $V_x$ be a region in $Z$ containing $x$, and let $R_x$
be an element of $S(R_0)$ which is contained in $V_x$ and such that $x$ belongs to $R_x$. Let $n_x$ be the least integer
such that $R_x$ belongs to $ S(R_0)_n$.

The set $R_0$ is covered by the interiors of the boxes $R_x$ as $x$ varies in $R_0$, and therefore we can extract a
finite covering, say:
\[
R_0=\bigcup_{i=1}^l R_{x_i}.
\]
Set $L$ to be the maximum of the integers $n_{x_1},\ldots,n_{x_l}$. It is easily verified that the set $Z$ and the
integer $L$ satisfy the condition of the lemma.

The converse is obviously true, since the hyperbolic regions belonging to $Z$ already cover $R_0$, which contains $D$.
\end{proof}

The algorithm performing Part (\ref{mainthm1}) of Main Theorem \ref{main theorem} can easily be described in recursive form. Algorithm~\ref{alg:recursive} is the recursive function \textbf{SOLVE}, which accepts as input a box $R$. Algorithm~\ref{alg:main} is the main loop.
 \begin{algorithm}[H]
 \caption{\textbf{SOLVE}}
\label{alg:recursive}
\begin{algorithmic}
  \REQUIRE A box $R$. Global $T,N$ and a computed $Q_{T,N}$. A global vector $Z$ of regions used so far.
  \ENSURE The box $R$ is covered by regions in $Z$.
  \IF{there is $q\in Q_{T,N}$ such that $R\subset V(q)$}
    \STATE Append $q$ to $Z$.
  \ELSE
    \STATE Increase $T$ and $N$ and re-compute $Q_{T,N}$.
    \STATE Subdivide $R$ into four equal boxes $R_1,R_2,R_3,R_4$.
    \FOR{i=1 \TO 4}
      \STATE \textbf{SOLVE}$(R_i)$
    \ENDFOR
  \ENDIF
  \RETURN
\end{algorithmic}
\end{algorithm}
\begin{algorithm}[H]
\caption{Proves that $F$ is $2$-stage euclidean.}
\label{alg:main}
\begin{algorithmic}
\REQUIRE $F$ a real quadratic number field.
\ENSURE $F$ is $2$-stage euclidean.
\STATE Find a box $R_0$ in $\R^2$ that contains the fundamental domain $D$.
\STATE $Z\leftarrow []$.
\STATE Fix some initial $N$ and $T$ and compute the set $Q_{T,N}$.
\STATE \textbf{SOLVE}$(R_0)$
\RETURN $Z$, a list of regions covering the fundamental domain $D$.
\end{algorithmic}
\end{algorithm}
\begin{theorem}
 Algorithm~\ref{alg:main} terminates if and only if $F$ is $2$-stage euclidean.
\end{theorem}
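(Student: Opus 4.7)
The plan is to establish the biconditional by treating the two implications separately: the ($\Leftarrow$) direction is an essentially formal unwinding of the algorithm together with Proposition~\ref{proposition: finitely many sets}, while the ($\Rightarrow$) direction relies on Lemma~\ref{lemma:2stage-criterion} to bound the depth of the recursion tree.

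For ($\Leftarrow$), suppose Algorithm~\ref{alg:main} terminates, producing a finite list $Z$. Each leaf of the recursion tree of \textbf{SOLVE} corresponds to a box $R$ that was accepted because $R\subset V(q)$ for some $q\in Q_{T,N}$, and this $q$ was appended to $Z$. Since \textbf{SOLVE} was first called on $R_0$ and otherwise only subdivides its input into four equal subboxes, the union of all leaf boxes is $R_0$. Hence $R_0\supset D$ is covered by the finite family $\{V(q):q\in Z\}$, so by Proposition~\ref{proposition: finitely many sets} the field $F$ is $2$-stage euclidean.

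For ($\Rightarrow$), assume $F$ is $2$-stage euclidean and apply Lemma~\ref{lemma:2stage-criterion} to obtain a finite collection $Z^*$ of hyperbolic regions and an integer $L$ such that every box in $S(R_0)_L$ lies in some $V(q^*)\in Z^*$. Since $Z^*$ is finite, the denominator norms $|\Nm(q_2^*)|$ are uniformly bounded by some $N^*$; after adjusting each $q^*$ by a suitable $\cO_F$-translate (using the identity $V(q+\gamma)=V(q)+\gamma$ and translating the associated subdivision box accordingly), we may further assume that the $\omega$-coefficient of $q^*$ is bounded by some $T^*$ and that $V(q^*)\cap D\neq\emptyset$, so that the adjusted $q^*$'s lie in $Q_{T^*,N^*}$. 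Because Algorithm~\ref{alg:recursive} enlarges $T$ and $N$ in every branch where a box fails to be covered, after finitely many such enlargements $T\geq T^*$ and $N\geq N^*$; from that point on, any box reached at subdivision depth $L$ is contained in some $V(q)$ with $q\in Q_{T,N}$, forcing the recursion to terminate.

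The main obstacle is the $\cO_F$-translation step in the second direction: Lemma~\ref{lemma:2stage-criterion} furnishes regions covering all of $R_0$, whereas $Q_{T,N}$ is defined via intersection with $D\subseteq R_0$. Resolving this requires exploiting the $\cO_F$-periodicity of the cover together with the freedom to enlarge $R_0$ slightly without spoiling finiteness, so that every relevant $V(q^*)$ admits a translate in $Q_{T,N}$ which still covers the corresponding subbox. A secondary, more mechanical point is to confirm that the schedule by which \textbf{SOLVE} increments $(T,N)$ is unbounded in each coordinate, so that any fixed thresholds $T^*$ and $N^*$ are eventually surpassed.
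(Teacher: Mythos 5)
Your proof follows essentially the same route as the paper's: the ``only if'' direction is the paper's one-line observation that a terminating run exhibits a finite cover of $R_0\supseteq D$ by regions $V(q)$, and the ``if'' direction is exactly the paper's argument combining Lemma~\ref{lemma:2stage-criterion} with the fact that \textbf{SOLVE} halves box sides while monotonically enlarging $T$ and $N$, so that eventually $Z\subseteq Q_{T,N}$ and the boxes lie in some $S(R_0)_{L'}$ with $L'>L$. The ``obstacle'' you flag --- that a region needed to cover a subbox of $R_0\setminus D$ need not meet $D$ and hence need not lie in any $Q_{T,N}$ --- is a genuine subtlety, but the paper simply asserts the containment $Z\subseteq Q_{T,N}$ without addressing it, so on this point your write-up is, if anything, more careful than the published proof, even though your proposed repair via $\cO_F$-translates is not yet a complete argument (a translate $V(q^*+\gamma)$ covers the translated box, not the original one).
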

\begin{proof}
Note first that Algorithm~\ref{alg:main} terminates if and only if the function call to \textbf{SOLVE}$(R_0)$
terminates. Suppose
that $F$ is $2$-stage euclidean, and let $Z$ and $L$ be as given by Lemma~\ref{lemma:2stage-criterion} applied to the
box $R_0$. Let $T,N$ be such that
\[
Z\subseteq Q_{T,N}.
\]
The size of the boxes passed to the \textbf{SOLVE} function is divided by four each time that the recursion depth
increases. On the other hand, both $T$ and $N$ increase as well with the recursion depth. Therefore, for a sufficiently
large recursion depth we will have $T$ and $N$ satisfying the above containment, and at the same time boxes considered
will belong to $S(R_0)_{L'}$ for some $L'>L$. Hence the algorithm terminates in finite time.

Conversely, if the algorithm terminates it exhibits a list of regions that covers the fundamental domain. Therefore $F$
must be $2$-stage euclidean.
\end{proof}

\section{Numerical experiments and a measure of euclideanity}\label{section:tables}

As an application of the algorithm we have verified the following result.
\begin{theorem}\label{th: up to 8000}
All real quadratic number fields of class number $1$ and discriminant  less than $8,000$ are $2$-stage euclidean.
\end{theorem}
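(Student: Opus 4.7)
The proof will be a large computational verification using the algorithm developed in Section \ref{section: The algorithm}. The plan is as follows.

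First I would enumerate the finite list of squarefree positive integers $m$ such that $F=\Q(\sqrt m)$ has discriminant less than $8000$ and class number $1$; this is a standard computation (e.g., using the built-in class number routines in Sage/PARI) and produces a well-defined finite set of test fields. For each such $m$, I would run Algorithm~\ref{alg:main} on $F=\Q(\sqrt{m})$, choosing reasonable initial values of the parameters $T$ and $N$ and the containing box $R_0$ of the fundamental domain $D$. By the theorem proved at the end of Section~\ref{section: The algorithm}, termination of the algorithm on input $F$ is logically equivalent to $F$ being $2$-stage euclidean, so if the algorithm halts on every $F$ in the list, the theorem is proved.

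To make the verification rigorous, I would take some care with two points. First, all arithmetic with the centers $q=[q_1,q_2]\in Q_{T,N}$ is exact (it takes place in $F$), so the only possible source of numerical error is in the test ``$R\subset V(q)$'' performed by \textbf{SOLVE}. That test reduces, via Lemma~\ref{lemma:box}, to evaluating $|\Nm(c-q)|<|\Nm(q_2)|^{-1}$ at the four corners of $R$, which can be carried out exactly using the embedding $v\colon F\hookrightarrow\R^2$ and rational (or algebraic) arithmetic; thus the algorithm's output can be certified. Second, to keep runtimes manageable, I would rely on the strategy of the proof of Proposition~\ref{Qs are finite and computable}: enumerate ideals of bounded norm to build $Q_N$, and then extend to $Q_{T,N}$ by translating only to those $a+b\omega$ whose associated strips $R_d^x\cup R_d^y$ meet the box $R_0$.

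The main obstacle is therefore practical rather than conceptual: for fields with larger discriminant (and in particular small fundamental unit, i.e.\ large fundamental domain when measured against the norm scale), both the required $N$ and the recursion depth in \textbf{SOLVE} can grow substantially, so a naive implementation may be too slow or too memory-hungry. I would mitigate this by the heuristics sketched in the paper (growing $T$ and $N$ gradually, reusing previously computed regions $Z$, and subdividing only boxes not yet covered), and I would parallelize the outer loop over the fields $F$. Once every field in the enumerated list returns a finite covering $Z$ of $R_0\supset D$ by hyperbolic regions $V(q)$ with $q\in\CF_2$, Proposition~\ref{proposition: finitely many sets} (equivalently, Lemma~\ref{lemma:2stage-criterion}) gives that each such $F$ is $2$-stage euclidean, completing the proof of Theorem~\ref{th: up to 8000}.
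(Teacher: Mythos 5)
Your proposal matches the paper's approach: Theorem~\ref{th: up to 8000} is proved there exactly as you describe, by enumerating the class number one real quadratic fields of discriminant below $8000$ and running Algorithm~\ref{alg:main} (whose termination certifies $2$-stage euclideanity) on each, using a Sage implementation. Your additional remarks on exact arithmetic for the corner tests and on the practical tuning of $T$ and $N$ are consistent with the implementation details the paper reports (e.g.\ the fixed value $T=5$ and the adaptive increase of $N$).
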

The computations were performed using Sage in a laptop with processor Intel Core 2 Duo T7300 / 2.0 GHz  and 2.0 GB of
RAM. The time needed for checking the $2$-stage euclideanity of a given number field tends
to grow with the discriminant.
For instance, for discriminants of size about $100$ it takes no more than a few seconds, while for
discriminants of size about $8000$ it can take up to several hours.

In spite of this, the size of the discriminant is not the only factor that determines the computational cost. For
instance, we have observed that discriminants of similar size can lead to very different times of execution, depending
 on the number of small primes that are inert in the field. Recall that the algorithm terminates when it covers the
domain $D$ with sets $V(q)$. These $V(q)$ are bounded by hyperbolas of the type  $(x-x_0)(y-y_0)=\pm 1/|\Nm(q_2)|$,
where $q_2$ is
the denominator of $q$. A prime $p$ that
is not inert is $F$ leads to hyperbolas of the type $(x-x_0)(y-y_0)=\pm 1/p$. However, if $p$ is inert it  leads to
hyperbolas of the type $(x-x_0)(y-y_0)=\pm 1/p^2$ instead, which are likely to cover a smaller part of $D$. As an
illustration of this phenomenon, we mention that it took $68$ seconds to check the $2$-stage euclideanity of
$\Q(\sqrt{1273})$, where  primes $2$, $3$, and $5$ are not inert, whereas it took $1131$ seconds to check
$\Q(\sqrt{1253})$, where the only prime less than $20$ that is not inert is $7$.

The sizes  of the   hyperbolic regions are also related to another  factor that can substantially
affect the running time: the maximum norm of the denominators  of the regions $V(q)$ that are needed to cover $D$. In
order to
analyze  this influence, it is useful to make the following definition.
\begin{definition}
Let $F$ be a $2$-stage euclidean field with fundamental domain $D$. We say that
$F$ is \emph{$n$-smooth euclidean} if
\[
 D\subseteq \bigcup_{q\in  \CF_2(n)} V(q);
\]
that is,  if $D$ can be covered by using regions of denominator up to $n$.
\end{definition}
Let $n$ be the smallest integer such that $F$ is $n$-smooth euclidean. The complexity of the algorithm depends on $n$,
because it determines the number of hyperbolic regions with center in $D$ to be computed. Indeed, the number of ideals
of
norm $m$ is $O(m^\varepsilon)$. By (the proof) of Proposition \ref{Qs are finite and computable},  there are at
most $m$ centers of hyperbolic regions lying in $D$ for each ideal of norm $m$, thus giving $O(m^{1+\varepsilon})$
centers in $D$ whose denominator has norm $m$. Since one has to consider all ideals of norm $m\leq n$, the cardinality
of the set $Q_{n}$ is at most $O(n^{2+\varepsilon})$.

However, the algorithm actually works with translations of elements in $Q_n$. That is, the centers to be considered lie
in $Q_{T,n}$ for some $T$, which corresponds to the maximum length of translations. Unfortunately, to fully determine
the complexity of the algorithm we lack an estimation of  $T$, as well as of the integer $L$ in Lemma
\ref{lemma:2stage-criterion}, which gives the number of boxes that are to be checked by the function {\bf SOLVE}. We
remark that in the implementation used to prove Theorem \ref{th: up to 8000} a value of $T=5$ was enough to solve for
all
discriminants.

The value of the smallest $n$ such that $F$ is $n$-smooth euclidean  does not only depend  on the size of the
discriminant,
but also on the splitting behavior of small primes. As an illustration of this fact, Figure~\ref{fig} plots  the maximum
norm of the denominators that our implementation used to cover the tested number fields,
according to their discriminant.

\begin{figure}[h]
\includegraphics[width=80mm]{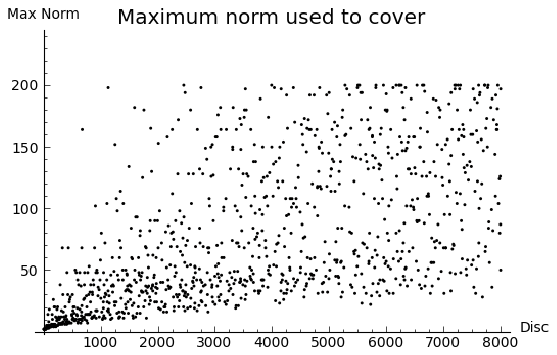}
\caption{}\label{fig}
\end{figure}

When carrying out this test we initialized the maximum norm to $N=200$. This explains why the points tend to accumulate
towards this value as the discriminant increases. Also notice how, even if the points appear distributed in a random
fashion, there is a region on the lower part of the graph where no point lies. This region increases with the
discriminant, and in the next subsection we will give an explanation to this phenomenon.

Figure \ref{fig2} is similar to Figure \ref{fig}, but only the data for some of the number fields are plotted: those in
which both $2$ and $3$ remain inert are shown as red squares, whereas those in which $2$ and $3$ split are shown as blue
circles.

\begin{figure}[h]
\includegraphics[width=80mm]{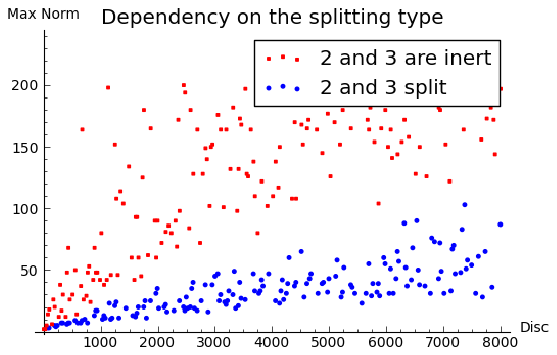}
\caption{}\label{fig2}
\end{figure}

We remark that if the algorithm manages to cover $D$ using denominators of norm up to $n$, this implies that $F$
is $n$-smooth euclidean. However, this does not rule out the possibility that the field could be $m$-smooth euclidean
for some
$m<n$. If we knew a priori the smallest $n$ such that $F$ is $n$-smooth euclidean, we could initialize  $N$ in Algorithm
\ref{alg:main}
 to this value and then only increasing $T$ the algorithm would cover $D$. However, the
value of the smallest $n$ is not known a priori, so that  the algorithm is not guaranteed to
finish if one only increases $T$. Therefore, the algorithm may have to eventually increase also the
maximum norm $N$, and this can lead to considering norms higher than what is strictly necessary.
Actually, the way in which $T$ and $N$ are increased turns out to be the most critical implementation parameter of the
algorithm, as for the running time is concerned. In the implementation we used for proving Theorem \ref{th: up to 8000}
we took a constant value of $T=5$,  and the increasing step for $N$ was proportional to the part of $D$ not being
covered (the proportional constant found by fine-tuning).

\subsection*{A measure of euclideanity} Besides its computational influence in our implementation, the smallest $n$
such that $F$ is $n$-smooth euclidean can also be
interpreted as a measure of how far is $F$ from being euclidean. Indeed, it is easily seen that  $F$ is
$n$-smooth euclidean
 if and only if for any pair $\alpha,\beta\in \cO_F$, $\beta \neq 0$, one can find $q,r\in \frac{1}{n}\cO_F$ such that
\[
 \alpha=q\beta +r,
\]
with $|\Nm(r)|<|\Nm(\beta)|$. In particular, $F$ is $1$-smooth euclidean if and only if it is euclidean. In this way,
the
following statement can be seen as a generalization of the classical result on the existence of finitely many euclidean
real quadratic fields.

\begin{theorem}\label{th: finitely many n-coverable}
 Let $n$ be a positive integer. There exist only finitely many $n$-smooth euclidean real quadratic  fields.
\end{theorem}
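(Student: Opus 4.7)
The plan is to reformulate the $n$-smooth hypothesis as an upper bound on the inhomogeneous norm-minimum
\[
M^*(\cO_F) := \sup_{x \in \R^2} \min_{\gamma \in \cO_F} |\Nm(x - \gamma)|
\]
of the lattice $\cO_F \subset \R^2$, and then to deduce the finiteness from the fact that $M^*(\cO_F)$ grows with $\disc F$. For the first step, write any $q \in \CF_2(n)$ as $q = q_1 + 1/q_2$ with $|\Nm(q_2)| \leq n$; since $1/q_2 \in \tfrac{1}{|\Nm(q_2)|}\cO_F$, one has $q \in \tfrac{1}{L}\cO_F$ for $L := \operatorname{lcm}(1,\ldots,n)$. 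The defining inequality $|\Nm(x - q)| < 1/|\Nm(q_2)| \leq 1$ of $V(q)$ then says that every point of $\R^2$ lies at norm-distance less than $1$ from some element of $\tfrac{1}{L}\cO_F$, which, after rescaling by $L$, becomes $M^*(\cO_F) < L^2$, a quantity depending only on $n$.

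It therefore suffices to show that, for each fixed $K$, only finitely many real quadratic fields of class number $1$ satisfy $M^*(\cO_F) < K$. For $F = \Q(\sqrt{m})$ with $m$ large, I would construct a test point $x_0 \in \R^2$ whose $|\Nm|$-distance to every element of $\cO_F$ exceeds $K$. A natural candidate is $x_0 = v(\omega)/(2N)$ with $N$ depending on $K$; writing $\gamma = a + b\omega$ and expanding, $\min_{\gamma} |\Nm(x_0 - \gamma)|$ equals $\min |u^2 - m v^2|/(4N^2)$ over $(u,v) \in \Z^2$ constrained to a prescribed residue class modulo $2N$ (namely $u \equiv 0,\ v \equiv 1$). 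For the trivial choice $\gamma = 0$ this gives $m/(4N^2)$, which already exceeds $K$ once $m > 4 N^2 K$; for $m$ away from a sparse set of ``Pell-friendly'' values, a classical Diophantine lower bound rules out $(u,v) \neq (0, \pm 1)$ making $|u^2 - m v^2|$ small in the prescribed class. The Pell-friendly values of $m$ are handled by varying $x_0$ through a finite family of alternative test points adapted to their congruence structure.

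The main obstacle is treating the Pell-friendly sequences, such as $m = k^2 + 1$, where nontrivial Pell solutions can land in the target class modulo $2N$ and undercut the naive Diophantine lower bound. The cleanest way around this is to appeal to an analytic lower bound of the form $M^*(\cO_F) \gg \log\disc(F)$, obtained by combining the Brauer-Siegel theorem with classical estimates on the regulator of real quadratic fields of class number $1$; this forces $M^*(\cO_F) \to \infty$ with $\disc F$, albeit only logarithmically, which still suffices for the finiteness conclusion. A more elementary route proceeds by enumerating Pell-friendly residue classes of $m$ modulo small integers and dispatching each by a tailored choice of $x_0$, at the cost of a longer case analysis.
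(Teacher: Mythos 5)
Your reduction is exactly the paper's: you note that every $q=[q_1,q_2]\in\CF_2(n)$ lies in $\frac1L\cO_F$ with $L=\operatorname{lcm}(1,\dots,n)$, so $n$-smoothness forces the inhomogeneous minimum $M(F)=\sup_{x}\inf_{\gamma\in\cO_F}|\Nm(x-\gamma)|$ to be at most $L^2$, and the theorem reduces to showing that $M(F)\to\infty$ with the discriminant. Up to that point the argument is correct and coincides with the paper's rescaling step.

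The gap is in the second half, which is where all the content lives. That $M(F)$ grows with $d(F)$ is genuinely nontrivial: the regions $\{|\Nm(x-\gamma)|<K\}$ have infinite area, so no volume or pigeonhole argument applies, and your sketch stalls exactly at the real difficulty, namely the Pell-type solutions making $|u^2-mv^2|$ small in the prescribed residue class. Neither proposed rescue closes it. The ``finite case analysis over Pell-friendly residue classes'' is not carried out, and it is doubtful it can be made finite: families such as $m=k^2+1$ or $m=k^2\pm2$ show that for every modulus there remain infinitely many $m$ with nontrivial small values of $u^2-mv^2$ in essentially any class. The analytic rescue is weaker still: Brauer--Siegel controls $h(F)R(F)$, i.e.\ the size of the fundamental unit, and there is no known (or evident) implication from a large regulator to a bound $M(F)\gg\log d(F)$; this would need an actual proof, not an appeal. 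The paper instead invokes the unconditional theorem of Ennola (refining Davenport) that $M(F)\geq\sqrt{d(F)}/(16+6\sqrt6)$ for all real quadratic fields; with that citation the proof is precisely the one-line rescaling you already wrote, since $d(F)>(16+6\sqrt6)^2L^4$ forces $M(F)>L^2$. Without Ennola's bound or a complete substitute for it, your argument does not establish the theorem.
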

This is a consequence of a well known property of euclidean minima of real quadratic fields. We recall that the
\emph{euclidean minimum of $F$} is defined to be
\[
M(F)=\inf \{\mu\in \R \colon \forall x\in F\  \exists \ y\in  \cO_F \text{ such that } |\Nm(x-y)|\leq \mu
\}.
\]
Denote by $d(F)$ the discriminant of $F$. By a result of Ennola \cite{En} we have that for real quadratic fields
\[
 M(F)\geq \frac{\sqrt{d(F)}}{16+6\sqrt{6}}.
\]
Theorem \ref{th: finitely many n-coverable}  follows immediately from the following lemma.
\begin{lemma}\label{lemma: theoretical bound}
Let $n$ be a positive integer and let $t=\operatorname{lcm}(1,2,3,\dots,n).$  If   $d(F)>(16+6\sqrt{6})^2\cdot t^4$
then there exists an element $z\in F$ such that
\[
\left|\Nm(z-q) \right|> 1\ \ \ \ \text{for
all $q\in \CF_2(n)$}.
\]
\end{lemma}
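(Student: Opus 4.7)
The proof plan has two ingredients: a divisibility observation that embeds $\CF_2(n)$ into $\frac{1}{t}\cO_F$, and a direct application of Ennola's bound on the Euclidean minimum $M(F)$.

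First I would observe that every $q = [q_1,q_2] \in \CF_2(n)$ lies in $\frac{1}{t}\cO_F$. Indeed, setting $m = |\Nm(q_2)|$, one has $1 \leq m \leq n$, and since $\cO_F$ is the ring of integers of $F/\Q$, the conjugate $\overline{q_2}$ satisfies $q_2\overline{q_2} = \pm m$, so $1/q_2 = \pm \overline{q_2}/m \in \frac{1}{m}\cO_F$. By definition of $t = \operatorname{lcm}(1,\dots,n)$, the integer $m$ divides $t$, hence $1/q_2 \in \frac{1}{t}\cO_F$, and therefore $q = q_1 + 1/q_2 \in \frac{1}{t}\cO_F$ as well.

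Next I would rephrase the conclusion. Any element of $\frac{1}{t}\cO_F$ can be written as $p/t$ with $p\in\cO_F$, so the statement to prove becomes: there exists $w \in F$ such that for every $p\in\cO_F$,
\[
|\Nm(w - p)| > t^2,
\]
and one then sets $z = w/t$, since for $q = p/t$ one has
\[
|\Nm(z-q)| = \frac{|\Nm(w-p)|}{|\Nm(t)|} = \frac{|\Nm(w-p)|}{t^2} > 1.
\]

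Finally I would apply Ennola's inequality $M(F) \geq \sqrt{d(F)}/(16+6\sqrt{6})$. The hypothesis $d(F) > (16+6\sqrt{6})^2 t^4$ yields $M(F) > t^2$. Unwinding the definition of $M(F)$ as the infimum of those $\mu$ for which every element of $F$ is within norm-distance $\mu$ of $\cO_F$, the strict inequality $t^2 < M(F)$ means $t^2$ is strictly below every element of that set, so $t^2$ itself does not belong to it. That is exactly the existence of some $w\in F$ with $|\Nm(w-p)|>t^2$ for all $p\in\cO_F$, completing the argument.

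The only subtle point is the strict-versus-non-strict inequality when invoking the infimum, which is why Ennola's bound has to be compared against $t^2$ strictly; this is automatic from the strict hypothesis on $d(F)$. The divisibility step embedding $\CF_2(n)$ into $\frac{1}{t}\cO_F$ is the only really substantive step — everything else is bookkeeping.
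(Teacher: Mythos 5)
Your proof is correct and follows essentially the same route as the paper's: apply Ennola's lower bound on the Euclidean minimum, rescale by $t$, and use the containment $\CF_2(n)\subseteq \frac{1}{t}\cO_F$. You additionally spell out two details the paper leaves implicit — the verification that $1/q_2=\pm\overline{q_2}/m\in\frac{1}{t}\cO_F$, and the careful extraction of a witness $w$ from the strict inequality $t^2<M(F)$ rather than assuming the infimum is attained — both of which are sound.
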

\begin{proof}
Let $z_0 $ be an element in $F$ with the property that
\[
 |\Nm(z_0-\alpha)|\geq\frac{\sqrt{d(F)}}{16+6\sqrt{6}}\ \  \text{ for all $\alpha\in \cO_F$},
\]
 and let $z=\frac{z_0}{t}$. Then
\[
 |\Nm(z-\frac{\alpha}{t})|\geq \frac{\sqrt{d(F)}}{t^2\cdot (16+6\sqrt{6})}>1\ \  \text{ for all $\alpha\in \cO_F$}.
\]
This finishes the proof, because  $\CF_2(n)$ is contained in $ \frac{1}{t}\cO_F$.
\end{proof}


\begin{thebibliography}{99}
\bibitem{Co}
G. E. Cooke, {\it A weakening of the Euclidean property for integral domains and applications to algebraic number
theory I.} J. Reine Angew. Math. 282 (1976), 133--156.
\bibitem{Co2}
G. E. Cooke, {\it A weakening of the Euclidean property for integral domains and applications to algebraic number
theory. II.} J. Reine Angew. Math. 283/284 (1976), 71--85.
\bibitem{CW}
G. E. Cooke, P. Weinberger, {\it On the construction of division chains in algebraic number rings, with applications to
{${\rm SL}_{2}$}.} Comm. Algebra. vol 3 (1975), 481--524.
\bibitem{cre}
J. E. Cremona, {\it Hyperbolic tessellations, modular symbols, and elliptic curves over complex quadratic fields}.
Compositio Mathematica, 51 no. 3 (1984), p. 275--324
\bibitem{DL}
H. Darmon and A. Logan, {\it Periods of Hilbert modular forms and rational points on elliptic curves.} Int. Math. Res.
Not. 2003, no. 40, 2153--2180.
\bibitem{Da}
H. Davenport, {\it Indefinite binary quadratic forms, and Euclid's algorithm in real quadratic fields.} Proc.
London Math. Soc. (2) 53 (1951), 65--82.
\bibitem{De}
L. Demb\'el\'e, {\it An algorithm for modular elliptic curves over real quadratic fields.}
Experiment. Math. 17 (2008), no. 4, 427--438.
\bibitem{DeVo}
L. Demb\'el\'e, J. Voight, {\it Explicit methods for Hilbert modular forms.} To appear in ``Elliptic Curves, Hilbert
modular forms and Galois deformations''. {\tt arXiv:1010.5727v2.}
\bibitem{En}
V. Ennola, {\it On the first inhomogeneous minimum of indefinite binary quadratic forms and
Euclid's algorithm in real quadratic fields}
Ann. Univ. Turku. Ser. A I 28 (1958), 58pp.
\bibitem{HW}
G. H. Hardy, E. M.  Wright, {\it An introduction to the theory of numbers.} Sixth edition. Revised by D. R.
Heath-Brown and J. H. Silverman. With a foreword by Andrew Wiles. Oxford University Press, Oxford, 2008. xxii+621 pp.
ISBN: 978-0-19-921986-5, 11-01
\bibitem{Le}
F. Lemmermeyer, {\it The Euclidean algorithm in algebraic number fields.} Exposition. Math. 13 (1995), no. 5, 385--416.
\end{thebibliography}
\end{document}